\DeclarePairedDelimiter{\abs}{\lvert}{\rvert}
\theoremstyle{plain} 
\newtheorem{theorem}{\indent\sc Theorem}[section] 
\newtheorem{lemma}[theorem]{\indent\sc Lemma}
\newtheorem{proposition}[theorem]{\indent\sc Proposition}
\theoremstyle{definition} 
\newtheorem{definition}[theorem]{\indent\sc Definition}
\newtheorem{remark}[theorem]{\indent\sc Remark}
\newcommand{\deldel}{\sqrt{-1}\partial \overline{\partial}}
\newcommand{\dbar}{\overline{\partial}}
\newcommand{\e}{\varepsilon}
\newcommand{\poly}{\bigtriangleup}
\def\address#1#2{\begingroup
\noindent\parbox[t]{7.8cm}{
\small{\scshape\ignorespaces#1}\par\vskip1ex
\noindent\small{\itshape E-mail}
\/: #2\par\vskip4ex}\hfill
\endgroup}
\title{Generalized K\"{a}hler Einstein metrics and uniform stability for toric Fano manifolds}
\author{
\textsc{Satoshi Nakamura$^{*}$}
} 
\date{}
\begin{document}

\maketitle


\footnote{ 
2010 \textit{Mathematics Subject Classification}.
Primary 53C25; Secondary 53C55, 58E11.
}
\footnote{ 
\textit{Key words and phrases}.
Generalized K\"{a}hler metrics, Toric Fano manifolds, Uniform relative Ding stability, Modified Ding funcrional.
}
\footnote{ 
$^{*}$Partly supported by Grant-in-Aid for JSPS Fellowships for Young Scientists, Number 17J02783.
}

\begin{abstract}
We give a complete criterion for the existence of generalized K\"{a}hler Einstein metrics on toric Fano manifolds
from view points of a uniform stability in a sense of GIT and the properness of a functional on the space of K\"{a}hler metrics.
\end{abstract}


\section{Introduction}
In his paper \cite{M}, Mabuchi extended the notion of K\"{a}hler Einstein metrics for Fano manifolds with non vanishing Futaki character.
In this paper, we call them generalized K\"{a}hler Einstein metrics.
Let $X$ be an $n$-dimensional Fano manifold and $\omega \in 2\pi c_1(X)$ be a K\"{a}hler metric.
We denote the Ricci form for $\omega$ by $\mathrm{Ric}(\omega)= -\deldel\log\det\omega^n$.
The Ricci potential $f_{\omega}$ for $\omega$ is the function satisfying
\[
\mathrm{Ric}(\omega)-\omega=\deldel f_{\omega} \quad\text{and}\quad \int_X e^{f_{\omega}}\omega^n =\int_X \omega^n.
\]
Then $\omega = \sqrt{-1}g_{i \bar{j}}dz^i\wedge d\bar{z}^j$ is called {\it generalized K\"{a}hler Einstein} 
if the complex gradient vector field of $1-e^{f_{\omega}}$ is holomorphic, that is,
\[
\dbar \Bigl(g^{i\bar{j}} \frac{\partial (1-e^{f_{\omega}})}{\partial\bar{z}^j} \frac{\partial}{\partial z^i} \Bigr)=0.
\]

Generalized K\"{a}hler Einstein metrics do not necessarily exist.
Mabuchi \cite{M} introduced an obstruction for the existence of this metric.
Let
\[
\tilde{\mathfrak{h}}_{\omega}:=\Set{f\in C^{\infty}_{\mathbb{R}}(X) 
| \dbar \Bigl(g^{i\bar{j}} \frac{\partial f}{\partial\bar{z}^j} \frac{\partial}{\partial z^i} \Bigr)=0 \quad\text{and}\quad \int_Xf\omega^n=0},
\]
and let $\mathrm{pr}: L^2(X, \omega) \to \tilde{\mathfrak{h}}_{\omega}$ be the $L^2$-orthogonal projection 
where $L^2(X, \omega)$ is the Hilbert space of real $L^2$-functions on $(X,\omega)$.
Then we can define the holomorphic invariant $\alpha_X$ as follows:
\[
\alpha_{X}:=\max_X \mathrm{pr}(1-e^{f_{\omega}}).
\]
Indeed $\alpha_X$ is independent of the choice of $\omega$.
Furthermore if $X$ admits generalized K\"{a}hler Einstein metrics 
then $\alpha_X<1$ must hold. See \cite{M} for more details.

For general Fano manifolds, necessary and sufficient conditions for the existence of generalized K\"{a}hler Einstein metics are not known.
For toric Fano manifolds, Yao introduced the notion of the {\it relative Ding stability} in a sense of geometric invariant theory.
(After the present paper was submitted on the arXiv, Yao renamed this stability the {\it uniform relative Ding stability} in his replaced paper \cite{Y} independently. His definition of the uniform relative Ding stability is equivalent to the author's one in the present paper.)
This stability condition is equivalent to  $\alpha_X<1$,
and implies the existence of generalized K\"{a}hler Einstein metrics (Yao called them {\it Mabuchi metrics}).
See \cite{Y} for more details.

In this paper, we introduce a notion of stability called the {\it uniform relative Ding stability} 
for toric Fano manifolds, and 
establish the complete criterion for the existence of generalized K\"{a}hler Einstein metrics from the view point of  the properness of a functional on the space of K\"{a}hler metrics.
These lines to attack the existence problem of generalized K\"{a}hler Einstein metrics are expected to extend for general Fano manifolds.

The uniform relative Ding stability can be regarded  as a generalization of the uniform Ding stability for K\"ahler Einstein metrics defined by Berman \cite{B}.
For more recent developments around the uniform stability and the Ding stability, see for instance
Hisamoto \cite{H} (for toric constant scalar curvature K\"ahler metrics), 
Boucksom-Hisamoto-Jonsson \cite{BHJ} (for constant scalar curvature K\"ahler metrics),
Dervan \cite{D} (for twisted constant scalar curvature K\"ahler metrics)
and Fujita \cite{F} (for the volume of K\"ahler Einstein Fano manifolds).

The following is our main theorem.

\begin{theorem}\label{main}
Let $X$ be a toric Fano manifold.
Then following conditions are all equivalent.
\begin{enumerate}
\item $X$ admits a unique toric invariant generalized K\"{a}hler Einstein metic.
\item $\alpha_X<1$.
\item $X$ is uniform relative Ding stable (Definiton \ref{uniform}).
\item The modified Ding functional is proper (Definition \ref{proper}).
\end{enumerate}
\end{theorem}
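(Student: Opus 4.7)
The plan is to prove the equivalences by transporting everything to the moment polytope $\Delta \subset \mathbb{R}^n$ of $X$ and exploiting Legendre duality. Torus-invariant K\"ahler potentials in $2\pi c_1(X)$ correspond bijectively to symplectic potentials on $\Delta$, so all four conditions admit explicit formulations in terms of convex functions on $\Delta$. In particular, the Mabuchi obstruction $\alpha_X$ will be realized as the maximum on $\Delta$ of a distinguished affine function $\ell$ determined by the barycenter of $\Delta$ and the weight $1 - e^{f_\omega}$, while the modified Ding functional becomes an explicit combination of a weighted entropy term and the $\ell$-twisted linear functional on symplectic potentials.

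For (1) $\Leftrightarrow$ (2), the forward direction is Mabuchi's original obstruction result from \cite{M}, and the reverse is Yao's existence theorem from \cite{Y}, so I would quote these essentially verbatim. For the equivalence (2) $\Leftrightarrow$ (3), I would first rephrase uniform relative Ding stability as a strict inequality $D(u) \geq \delta \, \|u\|$ for all normalized convex piecewise linear $u$ on $\Delta$, where $D$ is the modified Donaldson--Futaki invariant and $\|\cdot\|$ is an appropriate norm measuring distance from the space of affine functions on $\Delta$. An asymptotic analysis, in the spirit of Wang--Zhu for toric K\"ahler--Einstein metrics, shows that the leading behavior of $D(u)$ on unbounded families is governed by $\sup_\Delta \ell$, so the condition $\alpha_X < 1$ is equivalent to the existence of a uniform gap $\delta > 0$.

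The equivalence (3) $\Leftrightarrow$ (4) follows from the standard dictionary between test configurations and convex piecewise linear functions on $\Delta$: along the geodesic ray determined by such a $u$, the asymptotic slope of the modified Ding functional coincides with $D(u)$, while the entropy part dominates an appropriate $J$-type norm of the ray. Hence properness of the modified Ding functional on the space of toric K\"ahler potentials is equivalent to the uniform lower bound on $D$ that defines uniform relative Ding stability, in direct analogy with the Berman--Boucksom--Hisamoto--Jonsson picture \cite{B, BHJ, H}. Finally, (4) $\Rightarrow$ (1) follows by the standard variational argument: properness together with convexity of the modified Ding functional along weak geodesics yields a minimizer, the Euler--Lagrange equation of which is exactly the generalized K\"ahler--Einstein equation, and strict convexity transverse to the toric automorphisms gives uniqueness among toric invariant solutions.

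The hardest step I expect is (2) $\Rightarrow$ (3), where the pointwise bound $\alpha_X < 1$ must be upgraded to a \emph{uniform} coercivity estimate on the infinite-dimensional cone of convex functions on $\Delta$. The key technical issue is to fix the right normalization (quotienting by affine functions and pinning the minimum at a canonical interior point of $\Delta$) so that the combinatorial datum $\alpha_X$ produces a positive gap $\delta$ depending only on $\Delta$; a compactness argument for normalized convex functions, combined with the explicit description of the extremal affine function $\ell$, should close this gap. A secondary difficulty appears in (4) $\Rightarrow$ (1), where one must promote the weak minimizer to a smooth K\"ahler metric; this should follow from the $C^0$ and higher regularity theory for the real Monge--Amp\`ere equation on $\Delta$ under the Ding-type weight.
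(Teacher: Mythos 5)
Your overall architecture (reduce everything to convex functions on the polytope via Legendre duality, quote Mabuchi for $\mathit{1}\Rightarrow\mathit{2}$) matches the paper, but three of your steps have real problems. First, for $\mathit{2}\Rightarrow\mathit{3}$ you anticipate a difficult compactness/asymptotic analysis in the style of Wang--Zhu; the step is actually elementary once one uses the explicit identity $\alpha_X=\max_{\overline{\poly}}\{1-\abs{\poly}\,l\}$ (pointed out by Yao), which makes $\alpha_X<1$ equivalent to the pointwise bound $l\geq(1-\alpha_X)/\abs{\poly}>0$ on $\overline{\poly}$; since $\mathcal{I}(u)=\int_{\poly}u\cdot l\,dx$ for normalized $u$ (as $u(0)=0$), the coercivity $\mathcal{I}(u)\geq\lambda\int_{\poly}u\,dx$ is a one-line consequence. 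Your proposed route risks rediscovering this formula the hard way, and as stated does not actually produce the uniform gap $\delta$.

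Second, and this is the most serious gap: for $\mathit{3}\Rightarrow\mathit{4}$ you assert that the equivalence ``follows from the standard dictionary between test configurations and convex piecewise linear functions.'' That dictionary only yields the easy direction (properness implies positivity of slopes, hence stability). The direction you need --- uniform stability implies properness --- is the substantive analytic content of the theorem and requires a mechanism you have not supplied. The paper's argument compares $\mathcal{D}$ with an auxiliary functional $\mathcal{D}_A$ whose minimizer is a fixed smooth potential $v_0$ (minimality coming from convexity of the nonlinear term via the Prekopa--Leindler inequality), controls $\abs{\mathcal{I}(u)-\mathcal{I}_A(u)}$ by $C\int_{\poly}u\,dx$, and exploits the behaviour of the entropy term under the rescaling $u\mapsto u/(1+C_{\delta,\lambda})$ to absorb the error; none of this is ``standard dictionary.'' Third, in $\mathit{4}\Rightarrow\mathit{1}$ your variational argument omits the step in which properness is used to prove $l>0$ on all of $\overline{\poly}$ (the paper does this by testing $\mathcal{D}$ against convex functions $u_i=\tilde{u}_0+v_i$ with $v_i$ concentrating mass $K$ at a vertex $p$ and letting $K\to\infty$). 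Without $l>0$ the real Monge--Amp\`ere equation \eqref{eq} is degenerate and the Berman--Berndtsson existence theorem you implicitly need does not apply, so the ``regularity theory under the Ding-type weight'' you invoke has nothing to start from.
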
 

The proof will be done by showing ${\it 2} \Rightarrow {\it 3} \Rightarrow {\it 4} \Rightarrow {\it 1}$.
Recall that ${\it 1} \Rightarrow {\it 2}$ is the result of Mabuchi \cite{M} for general Fano manifolds.
Recall also that ${\it 2} \Rightarrow {\it 1}$ is the result of Yao \cite{Y}.
Therefore the author's main contribution is discussion in conditions ${\it 3}$ and ${\it 4}$.  



{\sc Acknowledgements:}
The author would like to thank Professor Shigetoshi Bando, Professor Shunsuke Saito and Doctor Ryosuke Takahashi for several helpful comments and constant encouragement.
He also would like to thank Professor Mattias Jonsson for introduction to related works around the uniform stability.

\section{Generalized K\"{a}hler Einstein metrics on toric Fano manifolds}\label{GKE}
Let $\poly$ be an open polytope  in $\mathbb{R}^n$ such that the closure $\overline{\poly}$ is a reflexive integral Delzant polytope,  
and $X_{\poly}$ be the corresponding toric Fano manifold with the open dense $(\mathbb{C}^*)^n$-action.
Note that $0\in\mathbb{R}^n$ is only integral point in $\poly$.
In the following, we will just write $X$ as $X_{\poly}$ for simplicity.
Let $(\mathbb{C}^*)^n = (S^1)^n \times \mathbb{R}^n$ be the standard decompositoin,
and let $\xi_i := \log |z_i|^2$ be the coordinate of $\mathbb{R}^n$ where $\{z_i\}$ is the standard coordinate of $(\mathbb{C}^*)^n$.
The following lemma is well-known.

\begin{lemma}
Let $\omega$ be a $(S^1)^n$-invariant K\"{a}hler metirc on X.
Then $\omega$ is determined by a smooth convex function (called K\"{a}hler potential) $\phi = \phi(\xi_1, \dots, \xi_n)$ on $\mathbb{R}^n$, that is,
$\omega = \deldel \phi$ on $(\mathbb{C}^*)^n$.
Moreover its gradient $\nabla\phi$ gives a diffeomorphism from $\mathbb{R}^n$ to $\poly$. 
\end{lemma}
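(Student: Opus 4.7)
The plan is to split the statement into three tasks: (i) produce a global $(S^1)^n$-invariant potential $\phi$ on $(\mathbb{C}^*)^n$, (ii) show that such a potential is automatically convex in the logarithmic coordinates $\xi_i$, and (iii) identify $\nabla\phi$ with the moment map of the $(S^1)^n$-action and deduce that it is a diffeomorphism onto $\poly$.

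For (i), I would fix a reference $(S^1)^n$-invariant Kähler form $\omega_0\in 2\pi c_1(X)$ (for instance the one coming from Guillemin's construction or the restriction of the Fubini--Study metric under the toric embedding). Since $[\omega]=[\omega_0]\in 2\pi c_1(X)$, the $\partial\bar\partial$-lemma on the Fano manifold $X$ yields a smooth real function $\psi\in C^\infty(X)$ with $\omega=\omega_0+\deldel\psi$. Averaging $\psi$ over the compact torus $(S^1)^n$ does not change the equation because $\omega$ and $\omega_0$ are already $(S^1)^n$-invariant, so we may assume $\psi$ is $(S^1)^n$-invariant, hence depends only on $\xi=(\xi_1,\dots,\xi_n)$ on the open orbit $(\mathbb{C}^*)^n$. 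For $\omega_0$ one knows (again by a direct check in Guillemin's model) that it admits an $(S^1)^n$-invariant potential $\phi_0(\xi)$ on $(\mathbb{C}^*)^n$. Setting $\phi:=\phi_0+\psi$ then gives $\omega=\deldel\phi$ on $(\mathbb{C}^*)^n$.

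For (ii), I would compute the Kähler matrix in the coordinates $\xi_i=\log|z_i|^2$. A direct calculation gives
\[
g_{i\bar j}=\frac{\partial^2\phi}{\partial z_i\partial\bar z_j}=\frac{1}{z_i\bar z_j}\,\frac{\partial^2\phi}{\partial\xi_i\partial\xi_j},
\]
so $(g_{i\bar j})$ is conjugate by the invertible diagonal matrix $\mathrm{diag}(1/z_i)$ to the real Hessian of $\phi$. Positive definiteness of $\omega$ is therefore equivalent to strict convexity of $\phi$ on $\mathbb{R}^n$.

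For (iii), the Hamiltonian function of the fundamental vector field of the $i$-th $S^1$-factor with respect to $\omega=\deldel\phi$ is $\partial\phi/\partial\xi_i$ up to an additive constant. Hence the moment map $\mu_\omega:X\to(\mathbb{R}^n)^*$ restricts on the open orbit to $\xi\mapsto \nabla\phi(\xi)$ (with a suitable normalization fixing the additive constants). By the Atiyah--Guillemin--Sternberg theorem and Delzant's classification, the image $\mu_\omega(X)$ equals $\overline{\poly}$ and the open orbit is mapped bijectively onto the interior $\poly$. Combined with (ii), which shows that $\nabla\phi$ has everywhere non-degenerate Jacobian, this gives that $\nabla\phi:\mathbb{R}^n\to\poly$ is a diffeomorphism.

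The genuinely non-trivial step is (iii): one must invoke the classical identification between the moment map and $\nabla\phi$, together with the fact that the moment image of the open orbit is exactly $\poly$. Steps (i) and (ii) are routine complex-geometric bookkeeping once the reference potential is in hand.
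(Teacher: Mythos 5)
The paper gives no proof of this lemma at all --- it is stated as ``well-known,'' with the surrounding text deferring to Abreu and the standard toric literature --- so there is no argument of the author's to compare against. Your outline is a correct and standard way to supply the missing proof, and I see no gap in it. Three small points worth tightening. First, in step (ii) the matrix $(g_{i\bar j})$ is \emph{congruent}, not conjugate, to the real Hessian: writing $D=\mathrm{diag}(1/z_1,\dots,1/z_n)$ one has $(g_{i\bar j})=D\,(\nabla^2\phi)\,\overline{D}^{\,t}$, and it is congruence that preserves positive definiteness; also ``strict convexity'' should really be ``positive-definite Hessian,'' which is the property you actually use later for the local diffeomorphism. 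Second, in step (iii) it is cleaner to separate the three ingredients: injectivity of $\nabla\phi$ follows already from convexity via $(\nabla\phi(\xi)-\nabla\phi(\xi'))\cdot(\xi-\xi')>0$ for $\xi\neq\xi'$, with no moment-map input; the local diffeomorphism property is the non-degenerate Hessian from (ii); only surjectivity onto the open polytope genuinely needs the toric input that the moment image of $X$ is $\overline{\poly}$ and that $\partial\poly$ is precisely the image of the complement of the open orbit. Third, as stated the lemma does not assume $\omega\in 2\pi c_1(X)$, so in step (i) you should take the Guillemin (or any invariant) reference form in the class $[\omega]$ rather than in $2\pi c_1(X)$; since the paper only ever applies the lemma to $\omega\in 2\pi c_1(X)$, where $\poly$ is the interior of the reflexive Delzant polytope, your restriction is harmless in context.
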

Let $\omega \in 2\pi c_1(X)$ be a $(S^1)^n$-invariant reference K\"{a}hler metirc 
and $\phi_0$ is the K\"{a}hler potencial of $\omega$.
Let $u_0$ be the Legendre dual of $\phi_0$, that is, 
\begin{equation*}
u_0(x) = \xi\cdot x - \phi_0(\xi) \quad\text{and}\quad x=\nabla\phi_0(\xi).
\end{equation*}
Then $u_0$ is a smooth convex function (called {\it symplectic potencial}) on $\poly$.
Let
\[
\mathcal{C}:= \Set{u \in C^0(\overline{\poly}) | u \text{ is convex on } \poly \text{ and } u-u_0 \in C^{\infty}(\overline{\poly})}. 
\]
Abreu \cite{Ab} showed that there is the bijection between $\mathcal{C}$ and the space of $(S^1)^n$-invariant K\"{a}hler metrics in $[\omega]$ through the Legendre duality.

Let us consider the {\it modified Ding functional} \cite{Y}
\[
\mathcal{D}(u) = -\log\int_{\mathbb{R}^n} e^{-(\phi-\inf\phi)}d\xi - u(0) + \int_{\poly}u\cdot l dx
\quad\text{for } u\in\mathcal{C},
\]
where $\phi$ is the Legendre dual of $u$ and $l$ is the unique affine linear function on $\poly$ such that
\[
- u(0) + \int_{\poly}u\cdot l dx = 0 \quad\text{ for any affine linear } u. 
\]
The critical point of the modified Ding functional is the generalized K\"{a}hler Einstein metric.
Indeed the derivative of $\mathcal{D}$ at $\phi$ is 
(Note that $\delta \phi = - \delta u(\nabla\phi)$ and $u(0)=-\inf\phi$.)
\begin{equation}\label{derivative}
\delta\mathcal{D} 
= \int_{\mathbb{R}^n} \delta\phi \Bigl( \frac{e^{-\phi}}{\int_{\mathbb{R}^n}e^{-\phi}} 
- l(\nabla\phi)\det(\nabla^2\phi) \Bigr)d\xi,
\end{equation}
and we can see that a smooth solution $\phi$ of 
\begin{equation}\label{eq}
\frac{e^{-\phi}}{\int_{\mathbb{R}^n}e^{-\phi}} = l(\nabla\phi)\det(\nabla^2\phi)
\end{equation}
such that its gradient $\nabla\phi$ gives a diffeomorphism from $\mathbb{R}^n$ to $\poly$
defines the generalized K\"{a}hler Einstein metric (cf. \cite[Theorem 14]{Y}).

\section{Uniform relative Ding stability}
Notation of this section is same as previous sections.
Let us consider the {\it relative Ding-Futaki invariant} \cite{Y}
\[
\mathcal{I}(u):= -u(0) + \int_{\poly} u\cdot l dx \quad\text{for } u \in \mathcal{C}.
\]
This is nothing but the linear term of the modified Ding functional.
We introduce the uniform relative Ding stability as a uniformly lower bound estimate of $\mathcal{I}$.
\begin{definition}\label{uniform}
A toric Fano manifold $X$ is {\it uniform relative Ding stable} if there is a constant $\lambda>0$ such that
\begin{equation}\label{bound}
\mathcal{I}(u) \geq \lambda \int_{\poly} u dx
\end{equation}
holds for every {\it normalized} convex function $u \in \tilde{\mathcal{C}}:= \Set{u\in\mathcal{C} | u\geq u(0)=0 }$.
\end{definition}
Although we can also define another stability by the lower bound estimate by $\lambda\int_{\partial\poly}ud\sigma$ as the analog of the uniform K-stability defined by Donaldson \cite{D}, 
this condition is stronger than that of Definition \ref{uniform}.
Indeed there is the uniform estimate as follows \cite{D}:
there is a constant $C>0$ such that
$
\int_{\poly}udx \leq C \int_{\partial\poly}ud\sigma
$
holds for all $u\in\tilde{\mathcal{C}}$.
Our uniform stability follows from the condition $\alpha_X<1$ immediately.
\begin{lemma}\label{A}
Let $X$ be a toric Fano manifold.
If the condition $\alpha_X<1$ holds then $X$ is uniform relative Ding stable.
\end{lemma}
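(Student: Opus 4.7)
The plan is to translate the holomorphic invariant $\alpha_X$ into a direct statement about the affine function $l$ on $\poly$, and then read off the stability inequality at once. For $u\in\tilde{\mathcal{C}}$ the normalization $u(0)=0$ reduces the relative Ding--Futaki invariant to $\mathcal{I}(u)=\int_\poly u\cdot l\,dx$, so a strict positive lower bound $l\geq c>0$ on $\overline{\poly}$, combined with the constraint $u\geq 0$ built into $\tilde{\mathcal{C}}$, immediately yields $\mathcal{I}(u)\geq c\int_\poly u\,dx$. It therefore suffices to show that $\alpha_X<1$ forces $\min_{\overline{\poly}} l>0$.

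The key identification is that, after pulling back along the moment map $\nabla\phi\colon\mathbb{R}^n\to\poly$, one has $\mathrm{pr}(1-e^{f_\omega})=1-\Vol(\poly)\cdot l$ as an affine function on $\poly$. To derive this, fix any $(S^1)^n$-invariant reference $\omega=\deldel\phi\in 2\pi c_1(X)$. The Ricci potential can be computed explicitly on the open orbit, $e^{f_\omega}=C_\phi\, e^{-\phi}/\det(\nabla^2\phi)$, with $C_\phi=\Vol(\poly)/\int_{\mathbb{R}^n}e^{-\phi}\,d\xi$ the constant forced by $\int_X e^{f_\omega}\omega^n=\int_X \omega^n$. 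Among $(S^1)^n$-invariant functions on $X$, the subspace $\tilde{\mathfrak{h}}_\omega$ consists precisely of the affine functions of $\nabla\phi$ with zero mean against $\omega^n$, so $\mathrm{pr}(1-e^{f_\omega})$ is characterized by
\[
\int_X (1-e^{f_\omega})\cdot h(\nabla\phi)\,\omega^n=\int_X \mathrm{pr}(1-e^{f_\omega})\cdot h(\nabla\phi)\,\omega^n \quad\text{for every affine } h \text{ on } \poly.
\]
Pushing $\omega^n$ forward to $\poly$ via $x=\nabla\phi$ turns it, up to a universal constant, into Lebesgue measure $dx$, while $e^{f_\omega}\omega^n$ pushes forward (after the same change of variables) to a multiple of $C_\phi\, e^{-\phi}\,d\xi$. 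The central analytic point is that for affine $h$, the integration by parts $\int_{\mathbb{R}^n}\nabla\phi\, e^{-\phi}\,d\xi=-\int_{\mathbb{R}^n}\nabla(e^{-\phi})\,d\xi=0$ yields $\int_{\mathbb{R}^n} h(\nabla\phi)\,e^{-\phi}\,d\xi=h(0)\int_{\mathbb{R}^n} e^{-\phi}\,d\xi$. Assembling these pieces, the left-hand side becomes a constant multiple of $\int_\poly h\,dx-\Vol(\poly)\,h(0)$, and matching with the right-hand side forces $\mathrm{pr}(1-e^{f_\omega})$, viewed as a function on $\poly$, to be the affine function $1-\Vol(\poly)\,l$, consistent with the defining relation $\int_\poly h\cdot l\,dx=h(0)$ for affine $h$.

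With this identification in hand, $\alpha_X=\max_{x\in\overline{\poly}}\bigl(1-\Vol(\poly)\,l(x)\bigr)=1-\Vol(\poly)\min_{\overline{\poly}} l$, so the hypothesis $\alpha_X<1$ gives $\min_{\overline{\poly}} l>0$ (the minimum is attained by continuity on the compact polytope), and setting $\lambda:=\Vol(\poly)\min_{\overline{\poly}} l$ finishes the proof via the observation of the first paragraph.

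The main obstacle is the projection identification. It rests on the vanishing of the boundary flux $e^{-\phi}\nabla\phi$ at infinity used to justify the integration by parts; this reflects the behaviour of $\phi$ at infinity, equivalently of the symplectic potential $u_0$ near $\partial\poly$, and one should confirm that for a smooth $(S^1)^n$-invariant K\"ahler metric in $2\pi c_1(X)$ the decay of $\phi$ suffices. Once this analytic point is in place, the deduction of the uniform estimate from $\alpha_X<1$ is essentially immediate.
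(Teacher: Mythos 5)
Your argument is correct and follows essentially the same route as the paper: the paper simply cites Yao for the identity $\mathrm{pr}(1-e^{f_{\omega}})=1-\Vol(\poly)\,l$ (equivalently $\alpha_X=\max_{\overline{\poly}}\{1-\Vol(\poly)\,l\}$), which you re-derive via the push-forward and the integration by parts $\int_{\mathbb{R}^n}\nabla\phi\,e^{-\phi}\,d\xi=0$, and then concludes with the same one-line estimate $\mathcal{I}(u)=\int_{\poly}u\cdot l\,dx\geq\bigl(\min_{\overline{\poly}}l\bigr)\int_{\poly}u\,dx$ for $u\in\tilde{\mathcal{C}}$. The only slip is your final constant: it should be $\lambda=\min_{\overline{\poly}}l=(1-\alpha_X)/\Vol(\poly)$ rather than $\Vol(\poly)\min_{\overline{\poly}}l$ (the latter can exceed $\min_{\overline{\poly}}l$ when $\Vol(\poly)>1$ and then fails the stability inequality), but this is harmless since your first paragraph already identifies the correct constant.
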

\begin{proof}
As Yao pointed out in \cite{Y}, 
the invariant $\alpha_X$ is given explicitly by
\[
\alpha_X = \max_{\overline{\poly}}\{1- \abs\poly l\},
\]
where $\abs\poly$ is the volume of $\poly$.
Therefore for any $u\in\tilde{\mathcal{C}}$, we have
\[
\mathcal{I}(u) = \int_{\poly}u\cdot ldx \geq \frac{1-\alpha_X}{\abs\poly}\int_{\poly}udx.
\]
\end{proof}

\section{Properness of the modified Ding functional}
First we recall a functional on the space of K\"{a}hler metrics.
Let $(X, \omega)$ be a $n$-dimensional compact K\"{a}hler manifold.
Let $G$ be any maximal compact subgroup of $\mathrm{Aut}(X)$.
If $\omega$ is $G$-invariant then we define
\[
\mathcal{M}_G(\omega) 
=\Set{\phi\in C^{\infty}(X) | \omega_{\phi}:= \omega +\deldel\phi >0 \text{ and } \phi \text{ is } G \text{ -invariant}.}.
\]
For any $\phi\in\mathcal{M}_G(\omega)$, we let
\[
J(\phi)= \int_0^1 \int_X \dot{\phi_t}(\omega^n-\omega^n_{\phi_t})\frac{1}{n!}\wedge dt,
\]
where $\{\phi_t\}_{t\in[0,1]}$ is a path in $\mathcal{M}_G(\omega)$ connecting $0$ to $\omega$.

In general, the properness of a functional on $\mathcal{M}_G(\omega)$ 
is defined as a uniform lower bound estimate by the functional $J$ \cite[Definition 6.8]{T}.
In the case of previous sections where $X$ is toric Fano manifold and $\omega\in2\pi c_1(X)$ is $(S^1)^n$-invariant reference K\"{a}hler metirc,
there exists a uniform constant $C>0$ such that
\begin{equation}\label{J}
\Bigl| J(\phi)-\int_{\poly}udx \Bigr| \leq C
\end{equation}
holds for all $u\in\tilde{\mathcal{C}}$ \cite[Lemma 2.2]{ZZ},
where $\phi$ is the $(S^1)^n$-invariant function associated from the K\"{a}hler metric defined by $u$ .
Thus we can define the properness of the modified Ding functional for toric Fano manifolds as follows:
\begin{definition}\label{proper}(cf. \cite[Definition 6.8]{T})
In the same notation as in the section \ref{GKE},
the modified Ding functional $\mathcal{D}$ is {\it proper} 
if there exists an increasing function $\mu(r)$ on $\mathbb{R}$ with the property
\[
\lim_{r\to\infty}\mu(r) = \infty,
\]
such that
\begin{equation}\label{inequality}
\mathcal{D}(u) \geq \mu \Bigl( \int_{\poly}udx \Bigr)
\end{equation}
holds for all $u\in\tilde{\mathcal{C}}$.
\end{definition}
Then we have the following:

\begin{proposition}\label{B}
Let $X$ be a toric Fano manifold.
Suppose $X$ is uniform relative Ding stable, 
that is, there exists a constant $\lambda>0$ satisfying
$
\mathcal{I}(u) \geq \lambda \int_{\poly} u dx
$
for all  $u \in \tilde{\mathcal{C}}$.
Then there exists $\delta >0$ depending only on $\lambda$ such that
\[
\mathcal{D}(u) \geq \delta\int_{\poly}udx -C_{\delta}
\]
holds for all $u\in\tilde{\mathcal{C}}$.
In particular the modified Ding functional is proper.
\end{proposition}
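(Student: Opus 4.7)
Decompose $\mathcal{D}(u) = F(u) + \mathcal{I}(u)$, where
$F(u) := -\log \int_{\mathbb{R}^n} e^{-\phi}\,d\xi.$
The linear part is handled immediately by stability: $\mathcal{I}(u) \geq \lambda \int_\poly u\,dx$. The main task is to control $F$ from below. I aim to show the universal sub-linear estimate: for every $\epsilon > 0$, there is $C_\epsilon > 0$ with
$$F(u) \geq -\epsilon \int_\poly u\,dx - C_\epsilon \qquad \text{for all } u \in \tilde{\mathcal{C}}. \qquad (\ast)$$
Taking $\epsilon = \lambda/2$ in $(\ast)$ and adding the stability inequality yields $\mathcal{D}(u) \geq (\lambda/2) \int_\poly u\,dx - C_{\lambda/2}$. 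This gives the conclusion with $\delta = \lambda/2$ (depending only on $\lambda$), and $\mu(r) = (\lambda/2)r - C_{\lambda/2}$ then witnesses properness.

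The estimate $(\ast)$ follows in turn from a polynomial growth bound
$$\int_{\mathbb{R}^n} e^{-\phi}\,d\xi \leq C\bigl(1 + \int_\poly u\,dx\bigr)^n \qquad \text{for all } u \in \tilde{\mathcal{C}}, \qquad (\ast\ast)$$
via the elementary inequality $n\log(1+r) \leq \epsilon r + C_\epsilon'$. To prove $(\ast\ast)$ I would combine a layer-cake decomposition
$$\int_{\mathbb{R}^n} e^{-\phi}\,d\xi = \int_0^\infty e^{-t}\,|\{\phi \leq t\}|\,dt$$
with the Legendre-duality description $\{\phi \leq t\} = \bigcap_{x \in \poly} \{\xi : \xi \cdot x \leq t + u(x)\}$. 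The naive estimate, restricting the intersection to the vertices of $\poly$, yields only $|\{\phi \leq t\}| \leq (t + \sup_\poly u)^n |\poly^\ast|$; this is too weak, because $\sup_\poly u$ is not controlled by $\int_\poly u\,dx$ for convex functions having a narrow spike at a vertex. The refinement is: for each vertex $v$ of $\poly$, pick an interior test point $x_v$ on the segment from $0$ to $v$ at which $u(x_v)$ is comparable to a suitable average of $u$, and use convexity of $u$ along $[0,v]$ to upgrade the half-space constraint $\xi \cdot x_v \leq t + u(x_v)$ into a constraint on $\xi \cdot v$ whose right-hand side involves $\int_\poly u\,dx$ instead of $\sup u$. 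Intersecting these refined half-spaces across all vertices should give a polynomial volume bound for $\{\phi \leq t\}$ and hence $(\ast\ast)$.

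The main obstacle is precisely this last step: converting integrated control of $u$ at a well-chosen interior point into a tight bound on the volume of the dual body $\{\phi \leq t\}$. This is a delicate interplay between the combinatorial geometry of $\poly^\ast$ and the convexity structure of elements of $\tilde{\mathcal{C}}$. Once $(\ast\ast)$ is in place, the reduction to properness via $(\ast)$ and the stability inequality is purely formal.
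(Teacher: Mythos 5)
Your route is genuinely different from the paper's, and its architecture is sound: since $\mathcal{I}$ is exactly the linear part of $\mathcal{D}$ and stability bounds it below by $\lambda\int_\poly u\,dx$, everything reduces to showing the nonlinear term is sublinear in $\int_\poly u\,dx$, which is your estimate $(\ast)$. The step you flag as the "main obstacle," namely $(\ast\ast)$, is in fact true, and you do not need the vertex-by-vertex construction you sketch; a single averaging argument suffices. For $u\in\tilde{\mathcal{C}}$ one has $\phi\geq -u(0)=0$, and if $\phi(\xi)\leq t$ then $x\cdot\xi\leq t+u(x)$ for all $x\in\poly$; integrating this over $\poly\cap\{x\cdot\xi>0\}$ and using $u\geq 0$ gives $|\xi|\,c_0\leq t\,\abs{\poly}+\int_\poly u\,dx$, where $c_0:=\inf_{|e|=1}\int_\poly (x\cdot e)_+\,dx>0$ because $\poly$ is an open neighbourhood of the origin. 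Hence $\{\phi\leq t\}$ lies in a ball of radius $c_0^{-1}(t\abs{\poly}+I)$ with $I=\int_\poly u\,dx$, and the layer--cake formula gives
\begin{equation*}
\int_{\mathbb{R}^n}e^{-\phi}\,d\xi=\int_0^\infty e^{-t}\,\bigl|\{\phi\leq t\}\bigr|\,dt\leq C\,(1+I)^n ,
\end{equation*}
which is $(\ast\ast)$; then $(\ast)$ follows from $n\log(1+r)\leq \epsilon r+C_\epsilon$, and your reduction with $\epsilon=\lambda/2$ is purely formal, as you say. So once this lemma is inserted your proof is complete, and it even yields the conclusion for every $\delta<\lambda$.

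For comparison, the paper argues quite differently: it introduces an auxiliary functional $\mathcal{D}_A$ whose linear density $A$ is manufactured so that a fixed reference potential $v_0$ is a critical point, invokes the Prekopa--Leindler inequality to conclude that the nonlinear term is convex and hence that $v_0$ is a global minimizer (so $\mathcal{D}_A\geq -C_0$), and then uses the stability hypothesis to absorb the bounded difference $|\mathcal{I}-\mathcal{I}_A|\leq C\int_\poly u\,dx$ after rescaling $u\mapsto u/(1+C_{\delta,\lambda})$. That argument never needs a quantitative volume bound on the sublevel sets of $\phi$ and is the pattern that generalizes to other coercivity statements, but it produces a less explicit $\delta$. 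Your approach is more elementary and exposes the reason properness holds: for the Ding-type nonlinearity the non-linear term grows only logarithmically in $\int_\poly u\,dx$, so stability of the linear part alone drives coercivity. The one genuine defect of the proposal as written is that $(\ast\ast)$ is asserted rather than proved; with the averaging argument above that gap closes.
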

\begin{proof}
For fixed $v_0 \in \mathcal{C}$ and its Legendre dual $\psi_0$, 
we define a smooth function $A$ 
by
\[
A(\nabla\psi_0) = \frac{e^{-\psi_0}}{\int_{\mathbb{R}^n}e^{-\psi_0}} \det(\nabla^2\psi_0)^{-1}.
\]
Then $v_0$ minimizes the following functional on $\mathcal{C}$:
\[
\mathcal{D}_A(u):= -\log\int_{\mathbb{R}^n} e^{-(\phi-\inf\phi)}d\xi - u(0) + \int_{\poly}u\cdot A dx
\]
where $\phi$ is the Legendre dual of $u$.
Indeed $v_0$ is the critical point of $\mathcal{D}_A$ (cf. \eqref{derivative}),
and the nonlinear term of $\mathcal{D}_A$ is convex with respect to $\phi$ by the Prekopa-Leindler inequality.
Let $-C_0:=\mathcal{D}_A(v_0)$ and let
\[
\mathcal{I}_A(u) := - u(0) + \int_{\poly}u\cdot A dx.
\]
Now we compute the difference between $\mathcal{I}$ and $\mathcal{I}_A$.
Taking a $\delta>0$ and for any $u\in\tilde{\mathcal{C}}$, we have 
\begin{eqnarray*}
|\mathcal{I}(u)-\mathcal{I}_A(u)| &\leq& C \int_{\poly}udx\\
                                                   &=& C \Bigl( (1+\delta)\int_{\poly}udx-\delta\int_{\poly}udx \Bigr)\\
                                                   &\leq& C_{\delta, \lambda}\mathcal{I}(u) - C\delta\int_{\poly}udx.
\end{eqnarray*}
The last inequality follows from the assumption of the uniform relative Ding stability.
It follows that
\[
(1+C_{\delta, \lambda})\mathcal{I}(u)\geq\mathcal{I}_A(u) +C\delta\int_{\poly}udx.
\]
Therefore we get 
\begin{eqnarray*}
\mathcal{D}(u) &=& -\log\int_{\mathbb{R}^n} e^{-(\phi-\inf\phi)}d\xi + \mathcal{I}(u)\\
                       &\geq& -\log\int_{\mathbb{R}^n} e^{-(\phi-\inf\phi)}d\xi
                                   + \mathcal{I}_A\Bigr(\frac{u}{1+C_{\delta, \lambda}}\Bigr) 
                                   + \frac{C\delta}{1+C_{\delta, \lambda}}\int_{\poly}udx\\
                      &\geq&  - \log\int_{\mathbb{R}^n} \exp\Bigl(-\frac{\phi-\inf\phi}{1+C_{\delta, \lambda}}\Bigr)d\xi
                                   + \mathcal{I}_A\Bigl(\frac{u}{1+C_{\delta, \lambda}}\Bigr) 
                                   + \frac{C\delta}{1+C_{\delta, \lambda}}\int_{\poly}udx\\
                      &=&          \mathcal{D}_A\Bigr(\frac{u}{1+C_{\delta, \lambda}}\Bigr) -n\log(1+C_{\delta, \lambda})
                                   + \frac{C\delta}{1+C_{\delta, \lambda}}\int_{\poly}udx\\
                      &\geq& -C_0 -n\log(1+C_{\delta, \lambda}) + \frac{C\delta}{1+C_{\delta, \lambda}}\int_{\poly}udx.
\end{eqnarray*}
Replacing $ \frac{C\delta}{1+C_{\delta, \lambda}}$ by $\delta$, this completes the proof.
\end{proof}

\section{Proof of Theorem \ref{main}}
By Lemma \ref{A} and Proposition \ref{B}, 
it remains to show that the properness of the modified Ding functional implies the solvability of the equation \eqref{eq}.
Then we have the following.
\begin{proposition}\label{positive}
Under the assumption of the properness of the modified Ding functional, we have
\[
l>0 \quad\text{on}\quad \overline{\poly}.
\]
 \end{proposition}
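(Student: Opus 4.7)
The plan is to argue by contradiction. Suppose $l(x_0)\le 0$ at some $x_0\in \overline{\poly}$; I will exhibit a sequence $\{v_k\}\subset\tilde{\mathcal{C}}$ with $\int_{\poly} v_k\,dx\to\infty$ but $\mathcal{D}(v_k)\to -\infty$, contradicting the properness inequality \eqref{inequality}.

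The first step is to build, for each small $\epsilon>0$, a smooth strictly convex $u_\epsilon\in\tilde{\mathcal{C}}$ whose weight is biased toward $x_0$, so that $\mathcal{I}(u_\epsilon)/\int_\poly u_\epsilon\,dx$ converges to $l(x_0)\le 0$ as $\epsilon\to 0$. A concrete recipe is to smooth a convex model such as $C\,(\langle a,x-y\rangle)_+^2$ with $a$ pointing toward $x_0$ and $y$ chosen so that the ramp vanishes at the origin, adding a small strictly convex background term to ensure $u_\epsilon$ is smooth, strictly convex, and satisfies $u_\epsilon-u_0\in C^\infty(\overline\poly)$. Next, scale. For $\lambda>0$ we have $\lambda u_\epsilon\in\tilde{\mathcal{C}}$, and if $\phi_\epsilon$ denotes the Legendre dual of $u_\epsilon$, then the Legendre dual of $\lambda u_\epsilon$ is $\lambda\phi_\epsilon(\xi/\lambda)$; substituting $\eta=\xi/\lambda$ yields
\[
\int_{\mathbb{R}^n}e^{-\lambda(\phi_\epsilon(\xi/\lambda)-\inf\phi_\epsilon)}\,d\xi \;=\; \lambda^n\int_{\mathbb{R}^n}e^{-\lambda(\phi_\epsilon(\eta)-\inf\phi_\epsilon)}\,d\eta.
\]
Since $\phi_\epsilon$ is smooth, strictly convex, with a nondegenerate minimum at $0\in\mathbb{R}^n$, a standard Laplace lower bound gives the inner integral $\ge c_\epsilon\lambda^{-n/2}$, whence
\[
\mathcal{D}(\lambda u_\epsilon)\;\le\; -\tfrac{n}{2}\log\lambda - \log c_\epsilon + \lambda\,\mathcal{I}(u_\epsilon).
\]
When $\mathcal{I}(u_\epsilon)\le 0$ (which is achievable for small $\epsilon$ as soon as $l(x_0)<0$), sending $\lambda\to\infty$ drives $\mathcal{D}(\lambda u_\epsilon)\to-\infty$ while $\int\lambda u_\epsilon\,dx\to\infty$, contradicting properness. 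When $\mathcal{I}(u_\epsilon)>0$ (as must happen in the borderline case $l(x_0)=0$ on $\partial\poly$), choose $\lambda_\star=n/(2\mathcal{I}(u_\epsilon))$ to minimize the right-hand side:
\[
\mathcal{D}(\lambda_\star u_\epsilon)\;\le\; \tfrac{n}{2}\log\mathcal{I}(u_\epsilon)-\log c_\epsilon+O(1),\qquad \int\lambda_\star u_\epsilon\,dx \;=\; \tfrac{n}{2}\cdot\tfrac{\int u_\epsilon\,dx}{\mathcal{I}(u_\epsilon)},
\]
and letting $\epsilon\to 0$ forces $\mathcal{I}(u_\epsilon)\to 0^+$ and $\int\lambda_\star u_\epsilon\,dx\to\infty$, so $\mathcal{D}(\lambda_\star u_\epsilon)\to-\infty$ once $\log c_\epsilon$ is controlled, again contradicting properness.

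The main obstacle is keeping the Laplace constant $c_\epsilon$ bounded away from $0$ as $\epsilon\to 0$. Since $c_\epsilon$ is dictated by the Hessian of $\phi_\epsilon$ at its minimum, and by the Legendre duality $\nabla^2\phi_\epsilon(0)=(\nabla^2 u_\epsilon(0))^{-1}$, one must design the family $u_\epsilon$ so that $\nabla^2 u_\epsilon(0)$ stays comparable to a fixed positive-definite matrix independent of $\epsilon$. A natural strategy is to take the $u_\epsilon$ as a single fixed smooth strictly convex background bump plus a small asymmetric perturbation biasing the weight toward $x_0$; then $c_\epsilon$ remains uniformly bounded and the asymptotic analysis closes, yielding the desired contradiction in both the strict case $l(x_0)<0$ and the delicate borderline case $l(x_0)=0$.
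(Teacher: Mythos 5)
Your overall strategy (concentrate test functions near a point where $l\le 0$ and show that the nonlinear term cannot compensate, contradicting properness) is the same as the paper's, but two steps do not go through as written. First, the scaling step applies the properness inequality \eqref{inequality} to $\lambda u_\epsilon$, which for $\lambda\neq 1$ does not lie in $\tilde{\mathcal{C}}$: membership in $\mathcal{C}$ requires $u-u_0\in C^{\infty}(\overline{\poly})$, and $u_0$ carries the singular Guillemin boundary behaviour, so $\lambda u_\epsilon-u_0=(\lambda-1)u_0+(\text{smooth})$ is not smooth up to $\partial\poly$. The paper avoids this by keeping the singular part fixed and scaling only a smooth convex piece: it tests with $u_i=\tilde{u}_0+v_i$, where $v_i$ is a smooth convex approximation of $K$ times the Dirac mass at a vertex $p$, and then lets $K\to\infty$. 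Second, your treatment of the nonlinear term via Laplace asymptotics hinges on a lower bound for the constant $c_\epsilon$ that is uniform as $\epsilon\to0$, which you yourself flag as ``the main obstacle''; the fix you propose (a fixed strictly convex background plus a \emph{small} perturbation biased toward $x_0$) is inconsistent with the earlier requirement that $\mathcal{I}(u_\epsilon)/\int_{\poly}u_\epsilon\,dx\to l(x_0)$, since with a small perturbation that ratio is governed by the background term, not by $l(x_0)$. So the borderline case $l(x_0)=0$, which you correctly identify as the delicate one, is not actually closed.

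Both difficulties disappear if you replace the Laplace estimate by the elementary observation the paper uses: for any $u\in\tilde{\mathcal{C}}$ (so $u\ge u(0)=0$) the Legendre dual satisfies $0\le\phi(\xi)\le\sup_{x\in\poly}(x\cdot\xi)$, hence
\[
-\log\int_{\mathbb{R}^n}e^{-(\phi-\inf\phi)}d\xi\ \le\ -\log\int_{\mathbb{R}^n}e^{-\sup_{x\in\poly}(x\cdot\xi)}d\xi,
\]
a universal constant (the support function grows linearly in every direction because $0\in\poly$). Then $\mathcal{D}(u)\le C+\mathcal{I}(u)$ for all $u\in\tilde{\mathcal{C}}$, properness gives $\mathcal{I}(u)\ge\mu\bigl(\int_{\poly}u\,dx\bigr)-C$, and testing with $u_i=\tilde{u}_0+v_i$ yields $K\,l(p)+O(1)\ge\mu(K+C_1)-C_2$; letting $K\to\infty$ forces $l(p)>0$ at every vertex $p$, with no case distinction, no scaling outside the class, and no Hessian control needed.
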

 
\begin{proof} 
Since $l$ is affine linear,  it suffice to show that $l(p)>0$ for any vertex $p$ of $\overline{\poly}$.
Let $\{v_i\}_{i\gg 1}$ be the sequence of smooth convex functions on $\overline{\poly}$ satisfying the followings:
\begin{itemize}
\item $v_i \geq v_i(0) =0$.
\item $v_i$ tends to the $K$ $(>0)$ times of the Dirac function for $p$ as $i \to \infty$.
\end{itemize}
A construction of $v_i$ is written in the end of this proof.

Then the convex function $u_i := \tilde{u}_0 + v_i$ is in  $\tilde{\mathcal{C}}$, 
where $\tilde{u}_0 \in \tilde{\mathcal{C}}$ is the normalization of the symplectic potencial $u_0 \in \mathcal{C}$ of the $(S^1)^n$-invariant reference K\"ahler metric on $X$.
Let $\phi_i$ be the Legendre dual of $u_i$, that is, $\phi_i(\xi)=\sup_{x\in\poly}(x\cdot\xi-u_i(x))$.
Note that
\[
\inf_{\mathbb{R}^n}\phi_i=0 \quad\text{and}\quad \phi_i(\xi)\leq\sup_{x\in\poly}(x\cdot\xi)
\]
since $\inf_{\mathbb{R}^n}\phi_i = -u_i(0)$ by a property of the Legendre duality, and $u_i \geq u_i(0) = 0$ by the definition of $u_i$.
It follows that
\[
\log\int_{\mathbb{R}^n}e^{-(\phi_i-\inf\phi_i)} d\xi \geq \log\int_{\mathbb{R}^n}e^{-\sup_{x\in\poly}(x\cdot\xi)}d\xi.
\] 
By the properness of the modified Ding functional \eqref{inequality}, we thus have
\[
\int_{\poly}u_i\cdot l dx \geq \mu\Bigl(\int_{\poly}u_idx\Bigr) + \log\int_{\mathbb{R}^n}e^{-\sup_{x\in\poly}(x\cdot\xi)}d\xi.
\]
By taking $i\to\infty$, we have
\[
K\cdot l(p)+\int_{\poly}l\cdot \tilde{u}_0 dx \geq \mu\Bigl(K+\int_{\poly}\tilde{u}_0 dx\Bigr)
+ \log\int_{\mathbb{R}^n}e^{-\sup_{x\in\poly}(x\cdot\xi)}d\xi.
\]
Namely,
$
K\cdot l(p) \geq \mu(K+C_1)-C_2
$
holds for some constants $C_i$ independent of $K$.
Hence, by taking $K$ sufficiently large, we get $l(p)>0$.

Finally, for a construction of $v_i$,
we take an affine linear function $w_i$ on $\mathbb{R}^n$ satisfying the followings:
\begin{enumerate}
\item $w_i(p)=Ki.$
\item $\int_{\overline{\poly}\cap \{w_i \geq 0\}} w_i dx = K.$
\item There exists $r_i>0$ such that $\lim_i r_i =0$ 
and $\Set{x\in\overline{\poly} | w_i(x)\geq 0} \subset \Set{x\in\overline{\poly} | \|x-p\|_{\text{Euc}}< r_i}$.
\end{enumerate}
Let $\hat{w}_i := \max_{\mathbb{R}^n}\{0, w_i\}$.
Note that $\hat{w}_i |_{\overline{\poly}}$ tends to the $K$ times of the Dirac function for $p$ as $i\to\infty$. 
To take a smoothing of $\hat{w}_i |_{\overline{\poly}}$, 
let us consider the convolution $\hat{w}_i \star \rho_{\e}$, where $\rho_{\e} \geq 0$ 
is the smooth mollifier on $\mathbb{R}^n$ whose support is in $B_{\e}(0)$. 
For sufficiently large $i$ and small $\e >0$, it is easy to see that $\hat{w}_i \star \rho_{\e}$ is smooth and convex on $\overline{\poly}$, 
and satisfies $\hat{w}_i \star \rho_{\e} \geq \hat{w}_i \star \rho_{\e}(0) = 0$ on $\overline{\poly}$. 
Note that $\hat{w}_i \star \rho_{\e}  |_{\overline{\poly}}$ uniformly converges to $\hat{w}_i  |_{\overline{\poly}}$ as $\e\to 0$.
Thus we can take $\e_i >0$ such that 
\[
\| \hat{w}_i \star \rho_{\e_i} - \hat{w}_i \|_{L^{\infty}(\overline{\poly})} \leq \frac{1}{i}.
\]
Then we define $v_i$ as $\hat{w}_i \star \rho_{\e_i}  |_{\overline{\poly}}$.
\end{proof}

The condition $l>0$ guarantees the non-degeneracy of the equation \eqref{eq}.
Hence the solvability of the equation of \eqref{eq} follows immediately from the result for real Monge-Amp\`{e}re equations by Berman and Berndtsson \cite[Theorem 1.1]{BB}.
This completes the proof of Theorem \ref{main}. 

\begin{remark}
By the same argument as in Proposition \ref{positive}, 
we can prove directly that the uniform relative Ding stability implies the condition $l>0$ on $\overline{\poly}$ (See also \cite[Proposition 12]{Y}).
Indeed, we apply the inequality \eqref{bound} of the uniform relative Ding stability to the function $u_i = \tilde{u}_0 + v_i \in \tilde{\mathcal{C}}$ in Proposition \ref{positive}.
By taking $i\to\infty$, we then have
\[
K\cdot l(p) + \int_{\poly}\tilde{u}_0 \cdot l dx \geq \lambda \cdot K + \lambda \int_{\poly} \tilde{u}_0 dx.
\]
Hence, by taking $K$ sufficiently large, we get $l(p)>0$.
\end{remark}
 
\newpage

\bigskip

\address{
Mathematical Institute \\ 
Tohoku University \\
Sendai 980-8578 \\
Japan
}
{satoshi.nakamura.r8@dc.tohoku.ac.jp}
\end{document}